\theoremstyle{definition}
\newtheorem{thm}{Theorem}[section]
\newtheorem{lemma}[thm]{Lemma}
\newtheorem{cor}[thm]{Corollary}
\newtheorem{prop}[thm]{Proposition}
\newtheorem{rmk}[thm]{Remark}
\def\id{\mathrm{id}}
\def\tr{\mathrm{Tr}}
\def\ccc{\mathbb{C}}
\def\hh{\mathbb{H}}
\def\zz{\mathbb{Z}}
\def\rr{\mathbb{R}}
\def\pp{\mathbb{P}}
\def\clo{\mathcal{O}}
\def\cls{\mathcal{S}}
\def\frI{\mathfrak{I}}
\def\pt{\partial}
\def\bpt{\overline{\pt}}
\def\ud{\mathrm{d}}
\def\bz{\overline{z}}
\def\bu{\overline{u}}
\def\bw{\overline{w}}
\def\bj{\overline{\jmath}}
\def\bk{\overline{k}}
\def\bzeta{\overline{\zeta}}
\def\bon{\bar{1}}
\def\btw{\bar{2}}
\def\vol{\mathrm{vol}}
\title{Some Torsional Local Models of Heterotic Strings}
\author{Teng Fei}
\begin{document}
\maketitle{}

\section{Introduction}

In his study of heterotic superstrings with torsion, Strominger \cite{strominger1986} derived a system of partial differential equations from considerations of supersymmetry and anomaly cancellation, which is later known as the \emph{Strominger system}. In real dimension 6, the internal manifold $X$ has to be a complex 3-fold with trivial canonical bundle. Moreover, $X$ is equipped with an Hermitian metric $\omega$ and an Hermitian holomorphic vector bundle $(E,h)$. Let $\Omega$ be a nowhere vanishing holomorphic (3,0)-form on $X$, then the Strominger system consists of the following equations\footnote{Strominger's original paper \cite{strominger1986} uses a different normalization for Equation (\ref{ac}).}:
\begin{eqnarray}
\label{hym}F\wedge\omega^2=0,\quad F^{0,2}=F^{2,0}=0,\\
\label{ac}\sqrt{-1}\pt\bpt\omega=\frac{\alpha'}{4}(\tr(R\wedge R)-\tr(F\wedge F)),\\
\label{cb'}\ud^*\omega=i(\pt-\bpt)\log\|\Omega\|_\omega.
\end{eqnarray}

In the above equations, $\alpha'$ is a positive coupling constant, $R$ and $F$ are curvature 2-forms of $T^{1,0}X$ and $E$ respectively, computed with respect to certain metric connections. As the Green-Schwarz anomaly cancellation mechanism works for arbitrary connection \cite{hull1986}, we have the freedom to choose the Chern connection to compute $R$, as suggested in \cite[Section 4.3]{strominger1986}. We will call Equations (\ref{hym}) and (\ref{ac}) the \emph{Hermitian-Yang-Mills} equation and \emph{anomaly cancellation} equation respectively.

It is very clear that the Strominger system generalizes the complex Monge-Amp\`ere equation used in the torsion-free theory in which $X$ is a K\"ahler Calabi-Yau. Assuming $X$ is K\"ahler, if we further let $E$ be the holomorphic tangent bundle $T^{1,0}X$, then $R=F$ and Equations (\ref{hym})-(\ref{cb'}) are solved automatically. These solutions will be called the \emph{K\"ahler solutions}.

The first attempt to the Strominger system was made in \cite[Section 4.2 \& 4.3]{strominger1986}, where orbifold solutions and perturbative solutions (up to first order) of K\"ahler solutions were obtained.

A huge progress was achieved by Li-Yau \cite{li2005} almost 20 years later. Their contribution has two aspects. Firstly, Li and Yau realized that Equation (\ref{cb'}) is equivalent to
\begin{equation}
\label{cb}\ud(\|\Omega\|_\omega\cdot\omega^2)=0,
\end{equation}
whose geometric meaning is very clear. Recall from \cite{michelsohn1982} that an Hermitian metric $\omega$ is called \emph{balanced} if it is co-closed. As Equation (\ref{cb}) says that $\omega$ is a balanced metric after a conformal change, we will call this equation the \emph{conformally balanced equation}. Such an interpretation imposes a mild topological restriction on $X$, on which we refer to Michelsohn's intrinsic characterization of balanced manifolds \cite[Theorem 4.7]{michelsohn1982}. Secondly, Li and Yau were able to obtain the first irreducible smooth solutions to the Strominger system by deforming the K\"ahler solutions. Such techniques were further developed in \cite{andreas2012, andreas2012b} to deal with more general perturbations.

Another breakthrough was due to Fu, Yau and their collaborators. They observed that on the geometric models constructed by Goldstein-Prokushkin \cite{goldstein2004} (sometimes known as the FSY geometries), a clever choice of ansatz reduces the whole Strominger system to a Monge-Amp\`ere type equation of a single dilaton function on the Calabi-Yau 2-fold base. Using this idea, they were able to get the first genuine non-K\"ahler solutions, on both compact backgrounds \cite{becker2006, fu2007, fu2008} and on local models \cite{fu2009}. Such a method can be further modified to yield more heterotic non-K\"ahler geometries \cite{becker2009}.

It should be mentioned that solutions to the Strominger system have also been found on blow-up of conifold \cite{carlevaro2010}, on various nilmanifolds \cite{fernandez2009, grantcharov2011, fernandez2014, ugarte2014, ugarte2015}, on (quotients of) $SL(2,\ccc)$ \cite{fei2015} and on Calabi-Gray models \cite{fei2015c}.

As a generalization of the complex Monge-Amp\`ere equation solved by Yau \cite{yau1978}, the Strominger system provides a natural framework to find canonical metrics on non-K\"ahler Calabi-Yau manifolds. This is closely related to the moduli problem of Calabi-Yau manifolds via Reid's fantasy \cite{reid1987}. Recall that by blowing down disjoint (-1,-1)-curves in a projective Calabi-Yau 3-fold, one gets a singular Calabi-Yau 3-fold with finitely many ordinary double points. These singularities can be further smoothed out to yield various topologically distinct 3-folds with trivial canonical bundle which are in general non-K\"ahler. Such a geometric manipulation is known as the \emph{conifold transition}. Roughly speaking, Reid's fantasy says that all reasonably nice Calabi-Yau 3-folds can be connected via a sequence of conifold transitions.

The local model of conifold transition was studied by Candelas-de la Ossa \cite{candelas1990} in great details, where explicit Ricci-flat K\"ahler metrics on both deformed and resolved conifold were constructed and their asymptotic behaviors were carefully analyzed. Because conifold transitions in general take place in the non-K\"ahler category, it is of great importance to understand the local model from a non-K\"ahler point of view. That is, to look for non-K\"ahler solutions to the Strominger system on both deformed and resolved conifold.

The first half was partially achieved in \cite{fei2015}, where homogeneous solutions to the Strominger system on deformed conifold were found through an identification with $SL(2,\ccc)$. The second half will be covered in this paper. Indeed, we are able to solve the Strominger system on the resolved conifold, and actually, on much more general local models. The ansatz we use comes from the twistor space construction, which is based on the ideas used in \cite{fu2008, fei2015c, fei2015b}. It should be noticed that our solution has a nontrivial $\tr(F\wedge F)$ term in order to cancel the anomaly, which does not follow from simple topological consideration.

This paper is organized as follows. In Section \ref{s2} we give a brief account of preliminaries, including differential geometry of resolved conifolds, hyperk\"ahler 4-manifolds and their twistor spaces. Section \ref{s3} is devoted to solving the Strominger system on a class of non-compact Calabi-Yau 3-folds which can be obtained by removing a fiber from the twistor fibration of hyperk\"ahler 4-manifolds. Some special examples of these solutions are analyzed in Section \ref{s4}. In particular, we obtain nontrivial solutions on $\ccc^3$ and on resolved conifold. Section \ref{s5} is of somewhat independent interest. We will make a discussion about Chern-Ricci flat balanced metrics on non-K\"ahler Calabi-Yau manifolds and give some examples. This can be viewed as a generalization of Calabi's work \cite{calabi1979}. In addition, we derive the Euler-Lagrange equation for extremal balanced metrics and show that Chern-Ricci flat balanced metrics are extremal (and in certain cases the only extremal ones).

\subsection*{Acknowledgement}

The author would like to thank Prof. Shing-Tung Yau and Prof. Victor Guillemin for their constant encouragement and help. Communications with Bao-Sen Wu are extremely inspiring and fruitful. The author is indebted to various helpful discussions with Peng Gao, Peter Smillie and Cheng-Long Yu as well. Thanks also goes to Prof. Valentino Tosatti for his useful comments.

\section{Preliminaries}\label{s2}

\subsection{Resolved Conifold}

Let $Q$ be the \emph{conifold}, or in other words an affine quadric cone in $\ccc^4$. That is, \[Q=\{(z_1,z_2,z_3,z_4)\in\ccc^4:z_1^2+z_2^2+z_3^2+z_4^2=0\}.\] It is quite obvious that $Q$ has a unique singular point at the origin, which is known as an ordinary double point. The small resolution of ordinary double points was first discovered by Atiyah \cite{atiyah1958} that we shall manifest.

By a linear transformation of coordinates \[\begin{cases}w_1=z_1+iz_2\\ w_2=z_3+iz_4\\ w_3=iz_4-z_3\\ w_4=z_1-iz_2\end{cases},\] we can identify $Q$ as the zero locus of $w_1w_4-w_2w_3$, or more suggestively, \[\det\begin{pmatrix}w_1 & w_2\\ w_3 & w_4\end{pmatrix}=0.\] Now let $\ccc\pp^1$ be parameterized by $\lambda=[\lambda_1:\lambda_2]$. Consider \[\widetilde{Q}=\left\{(w,\lambda)\in\ccc^4\times\ccc\pp^1:\begin{pmatrix}
w_1 & w_2\\ w_3 & w_4\end{pmatrix}\begin{pmatrix}\lambda_1\\ \lambda_2\end{pmatrix}=\begin{pmatrix}0\\ 0\end{pmatrix}\right\}.\] It is not hard to see that $\widetilde{Q}$ is smooth and the first projection \[p_1:\widetilde{Q}\to Q\] is an isomorphism away from $\{0\}\times\ccc\pp^1\subset\widetilde{Q}$. Therefore we call $\widetilde{Q}$ the \emph{resolved conifold} and $p_1:\widetilde{Q}\to Q$ the small resolution of $Q$, as the exceptional set $\ccc\pp^1$ is of codimension 2. Moreover, the second projection $p_2:\widetilde{Q}\to\ccc\pp^1$ allows us to identify $\widetilde{Q}$ with the total space of $\clo(-1,-1)\to\ccc\pp^1$. As a consequence, the resolved conifold $\widetilde{Q}$ has trivial canonical bundle.

\subsection{Hyperk\"ahler 4-manifolds and Their Twistor Spaces}

Let $(N,g)$ be a Riemannian 4-manifold. If in addition $M$ admits three integrable complex structures $I$, $J$ and $K$ with $IJK=-\id$ such that $g$ is a K\"ahler metric with respect to any of $\{I,J,K\}$, then we call $(N,g,I,J,K)$ a hyperk\"ahler 4-manifold.

Now we assume that $N$ is a hyperk\"ahler 4-manifold and denote by $\omega_I$, $\omega_J$ and $\omega_K$ the associated K\"ahler forms with respect to corresponding complex structures. One can easily check that $\omega_J+i\omega_K$ is a holomorphic nowhere vanishing $(2,0)$-form with respect to the complex structure $I$, therefore $(N,I)$ has trivial canonical bundle.

If $N$ is compact, then by the Enriques-Kodaira classification of complex surfaces, $N$ must be either a complex torus or a K3 surface. However, if we allow $N$ to be noncompact, there are many more possibilities. An extremely important class of them is the so-called ALE (asymptotically locally Euclidean) spaces. These spaces were first discovered as gravitational (multi-)instantons by physicists \cite{eguchi1978, gibbons1978} and finally classified completely by Kronheimer \cite{kronheimer1989a, kronheimer1989b}.

It is well-known that a hyperk\"ahler 4-manifold $N$ is anti-self-dual, therefore its twistor space $Z$ is a complex 3-fold \cite{atiyah1978}. Following \cite{hitchin1987}, the twistor space $Z$ can be described as follows. Let $\zeta$ parameterizes $\ccc\pp^1$. We shall identify $\ccc\pp^1$ with $S^2=\{(\alpha,\beta,\gamma)\in\rr^3:\alpha^2+\beta^2+\gamma^2=1\}$ via \[(\alpha,\beta,\gamma)= \left(\frac{1-|\zeta|^2}{1+|\zeta|^2},\frac{\zeta+\bzeta}{1+|\zeta|^2},\frac{i(\bzeta-\zeta)}{1+|\zeta|^2}\right).\] The twistor space $Z$ of $N$ is defined to be the manifold $Z=\ccc\pp^1\times N$ with the almost complex structure $\frI$ given by \[\frI=j\oplus(\alpha I_x+\beta J_x+\gamma K_x)\]
at point $(\zeta,x)\in\ccc\pp^1\times N$, where $j$ is the standard complex structure on $\ccc\pp^1$ with holomorphic coordinate $\zeta$. It is a theorem of \cite{hitchin1987} that $\frI$ is integrable and the projection $p:Z\to\ccc\pp^1$ is a holomorphic fibration. We shall call $p:Z\to\ccc\pp^1$ the twistor fibration.

The twistor spaces of ALE spaces can be described in other ways. For instance, the twistor spaces of ALE spaces of type $A$ can be constructed very concretely using algebraic geometry \cite{hitchin1979}. For later use, we shall give a different characterization of the $A_1$ case. i.e. the twistor space of Eguchi-Hansen space, as Hitchin did in \cite[Section 8]{hitchin1981}.

Let $Q$ and $\widetilde{Q}$ be conifold and resolved conifold described above. Consider the map  \[\rho=z_4\circ p_1:\widetilde{Q}\xrightarrow{p_1} Q\xrightarrow{z_4}\ccc.\] It is obvious that, when $z_4\neq0$, the fiber $\rho^{-1}(z_4)$ is a smooth affine quadric in $\ccc^3$. After a little work, we can see that $\rho^{-1}(0)$ is biholomorphically equivalent to the total space of $T^*\ccc\pp^1$. It follows that $\rho$ is a fibration.

Now let $\rho':\widetilde{Q}'\to\ccc$ be another copy of $\rho:\widetilde{Q}\to\ccc$. We may glue these two fibrations holomorphically by identifying $\rho^{-1}(\ccc^\times)\xrightarrow{\rho}\ccc^\times$ with $\rho'^{-1}(\ccc^\times)\xrightarrow{\rho'}\ccc^\times$ via \[(z'_1,z'_2,z'_3,z'_4)=\left(\dfrac{z_1}{z_4^2},\dfrac{z_2}{z_4^2},\dfrac{z_3}{z_4^2},\dfrac{1}{z_4}\right).\] As a consequence, we get a holomorphic fibration over $\ccc\pp^1$, which is exactly the twistor fibration of Eguchi-Hansen space.

We conclude that, when performing hyperk\"ahler rotations, there are exactly two complex structures on the Eguchi-Hansen space up to biholomorphism. There is a pair of two antipodal points on $\ccc\pp^1$, over which the fibers of twistor fibration are biholomorphic to the total space of $T^*\ccc\pp^1$. We shall call these fibers \emph{special}. All the other fibers are biholomorphic to the smooth affine quadric in $\ccc^3$. A key observation from this construction is the following proposition.
\begin{prop}(Hitchin, \cite[Section 8]{hitchin1981})\label{resolved}
If we remove a special fiber from the total space of the twistor fibration of Eguchi-Hansen space, then what is left is biholomorphic to the resolved conifold.
\end{prop}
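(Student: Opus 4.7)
The plan is to read the proposition off the explicit gluing description of $Z$ recalled just before the statement. My first step is to locate the two special fibers inside $Z$. Because $Z=\widetilde{Q}\cup\widetilde{Q}'$ is built by gluing two copies of the resolved conifold along $\rho^{-1}(\ccc^\times)$ and $\rho'^{-1}(\ccc^\times)$ via a transition whose restriction to the base is the antipodal map $z_4'=1/z_4$ on $\ccc\pp^1$, the two fibers of the twistor fibration $p:Z\to\ccc\pp^1$ that are \emph{not} covered by the gluing region are exactly $F:=\rho^{-1}(0)\subset\widetilde{Q}$ and $F':=\rho'^{-1}(0)\subset\widetilde{Q}'$. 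These sit over antipodal points of $\ccc\pp^1$ and each is a copy of $T^*\ccc\pp^1$, so they must be the two special fibers.

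Next, without loss of generality, I remove $F'$ from $Z$ and exhibit the canonical open immersion $\iota:\widetilde{Q}\hookrightarrow Z$ as a biholomorphism onto $Z\setminus F'$. The image $\iota(\widetilde{Q})$ clearly avoids $F'\subset\widetilde{Q}'$. Conversely, any point $q\in Z\setminus F'$ either already lies in the $\widetilde{Q}$-chart (done), or else lies in $\widetilde{Q}'\setminus F'=\rho'^{-1}(\ccc^\times)$, which by the very definition of the gluing is identified with $\rho^{-1}(\ccc^\times)\subset\widetilde{Q}$. In either case $q\in\iota(\widetilde{Q})$, so $\iota$ is surjective onto $Z\setminus F'$ and hence a biholomorphism.

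I expect essentially no genuine obstacle: once the gluing construction is in place the argument is a set-theoretic tautology. The one step that demands any verification is that the transition formula $(z_1,z_2,z_3,z_4)\mapsto(z_1/z_4^2,z_2/z_4^2,z_3/z_4^2,1/z_4)$ genuinely lifts to a biholomorphism of the \emph{total spaces} $\rho^{-1}(\ccc^\times)$ and $\rho'^{-1}(\ccc^\times)$, intertwining the $\ccc\pp^1$-factor arising from $p_2:\widetilde{Q}\to\ccc\pp^1$. This reduces to a short calculation in the $(w,\lambda)$ coordinates on the two standard charts of $\ccc\pp^1$, checking that $w_i\mapsto w_i/z_4^2$ preserves the defining matrix rank condition after a compensating rescaling of $[\lambda_1:\lambda_2]$. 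With that in hand, the proposition says no more than that $Z$ is literally assembled from two resolved conifolds, one for each special fiber, so throwing away one of them returns the other.
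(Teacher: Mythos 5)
Your argument is correct and is essentially the paper's own reasoning: the paper presents Proposition \ref{resolved} as an immediate consequence of the preceding gluing construction, in which $Z$ is assembled from two copies of $\widetilde{Q}$ identified along $\rho^{-1}(\ccc^\times)\cong\rho'^{-1}(\ccc^\times)$, so that deleting one special fiber $\rho'^{-1}(0)\cong T^*\ccc\pp^1$ leaves exactly the other copy of $\widetilde{Q}$. (Minor quibble: $z_4\mapsto 1/z_4$ is a holomorphic transition on the base rather than the antipodal map of $S^2$, but the two uncovered fibers do sit over the antipodal pair $0,\infty$, so nothing in your argument is affected.)
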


\section{Local Torsional Models of Heterotic Strings}\label{s3}

Let us first briefly recall the results in \cite{fei2015c}.

Given any hyperk\"ahler 4-manifold $N$ and any oriented minimal surface $\Sigma_g$ of genus $g$ in flat $T^3$, we can cook up a complex structure on $X=\Sigma_g\times N$ such that $X$ is a non-K\"ahler Calabi-Yau 3-fold. Indeed, $X$ can also be obtained as the total space of the pullback of the twistor fibration of $N$ through the Gauss map of $\Sigma_g$. In the case of $N=T^4$, by making use of the fibration structure, we were able to solve the whole Strominger system on $X$. However, the solution metric is degenerate on the fibers over the branched points of the Gauss map.

This fact suggests that it is probably more natural to consider the Strominger system on the twistor space $Z$. However, the twistor space $Z$ can never have trivial canonical bundle hence the Strominger system does not make sense. In order to remedy this problem, we need to remove some parts of $Z$ to get a noncompact complex 3-fold $Y\subset Z$ with trivial canonical bundle. It turns out that we only need to remove one single fiber of the twistor fibration.

Now let $N$ be an arbitrary hyperk\"ahler 4-manifold and $Z$ its twistor space with twistor fibration $p:Z\to\ccc\pp^1$. Without loss of generality, let $F_\infty$ be the fiber of $p:Z\to\ccc\pp^1$ over $\zeta=\infty\in\ccc\pp^1$ and we may remove $F_\infty$ from $Z$ to get a noncompact space $Y:=Z\setminus F_\infty$. One can check directly that $Y$ has trivial canonical bundle which can be trivialized by \[\Omega:=(-2\zeta\omega_I+(1-\zeta^2)\omega_J+i(1+\zeta^2)\omega_K)\wedge\ud \zeta.\] The main theme of this section is to write down a solution to the Strominger system on $Y$. As we shall see, the recipe for the ansatz is almost identical to the one we used in \cite{fei2015c}. However the curvature of $N$ will come into play and our calculation will be much more complicated. Moreover, we have to take the Hermitian-Yang-Mills equation (\ref{hym}) into account as well.

\subsection{The Conformally Balanced Equation (\ref{cb})}

Observe that $Y$ is diffeomorphically a product $\ccc\times N$ with twisted complex structure. Let $h:N\to\rr$ and $g:\ccc\to\rr$ be arbitrary smooth functions. In addition, we use \[\omega_{\ccc\pp^1}=\frac{2i}{(1+|\zeta|^2)^2}\ud\zeta\wedge\ud\bzeta\] to denote the round metric of radius 1 on $\ccc\pp^1$ and its restriction on $\ccc=\ccc\pp^1\setminus\{\infty\}$.

Now consider the Hermitian metric
\begin{equation}\label{ansatz}
\omega=\frac{e^{2h+g}}{(1+|\zeta|^2)^2}(\alpha\omega_I+\beta\omega_J+\gamma\omega_K)+e^{2g}\omega_{\ccc\pp^1}
\end{equation}
on $Y=\ccc\times N$. One can check that
\begin{equation}\label{mod}
\|\Omega\|_\omega=c\cdot\frac{(1+|\zeta|^2)^4}{e^{2h+2g}}
\end{equation}
for some positive constant $c$ and
\begin{equation}\label{bal}
\omega^2=2\frac{e^{4h+2g}}{(1+|\zeta|^2)^4}\vol_N +2\frac{e^{2h+3g}}{(1+|\zeta|^2)^2}(\alpha\omega_I+\beta\omega_J+\gamma\omega_K)\wedge\omega_{\ccc\pp^1},
\end{equation}
where $\vol_N$ is the volume form on $N$. It follows that $\omega$ solves the conformally balanced equation (\ref{cb}) for arbitrary $g$ and $h$ by direct computation.

\subsection{Curvature Computation and the Anomaly Cancellation (\ref{ac})}

Now we proceed to solve the anomaly cancellation equation (\ref{ac}) using ansatz (\ref{ansatz}). The first step would be to compute the curvature term $\tr(R\wedge R)$, using the Chern connection, with respect to the metric (\ref{ansatz}). To do so, following the method used in \cite{fei2015c}, we had better first solve for a local holomorphic frame of $(1,0)$-forms on $Y$.

We fix $I$ to be the background complex structure on the hyperk\"ahler 4-manifold $N$. Locally, there exist holomorphic coordinates $\{z_1,z_2\}$ such that the holomorphic $(2,0)$-form can be expressed as \[\omega_J+i\omega_K=\ud z_1\wedge\ud z_2.\] Let $\kappa$ be the K\"ahler potential, i.e., we have \[\omega_I=i\pt\bpt\kappa=i(\kappa_{1\bar{1}}\ud z_1\wedge\ud\bz_1+\kappa_{1\bar{2}}\ud z_1\wedge\ud\bz_2+\kappa_{2\bar{1}}\ud z_2\wedge\ud\bz_1+\kappa_{2\bar{2}}\ud z_2\wedge\ud\bz_2),\] where lower indices represent partial derivatives with respect to holomorphic or anti-holomorphic coordinates. As $N$ is hyperk\"ahler, we must have \[\kappa_{1\bar{1}}\kappa_{2\bar{2}}-\kappa_{1\bar{2}}\kappa_{2\bar{1}}=\frac{1}{4}.\] It is convenient to list the following equations

\begin{minipage}{0.48\textwidth}
\begin{align*}
I\ud z_1&=i\ud z_1,\quad I\ud z_2=i\ud z_2,\\
J\ud z_1&=-2(\kappa_{2\bar{1}}\ud\bz_1+\kappa_{2\btw}\ud\bz_2), \\
J\ud z_2&=2(\kappa_{1\bon}\ud\bz_1+\kappa_{1\btw}\ud\bz_2),  \\
K\ud z_1&=-2i(\kappa_{2\bon}\ud\bz_1+\kappa_{2\btw}\ud\bz_2), \\
K\ud z_2&=2i(\kappa_{1\bon}\ud\bz_1+\kappa_{1\btw}\ud\bz_2),
\end{align*}
\end{minipage}
~
\begin{minipage}{0.48\textwidth}
\begin{align*}
I\ud\bz_1&=-i\ud\bz_1,\quad I\ud\bz_2=-i\ud\bz_2,\\
J\ud\bz_1&=-2(\kappa_{1\btw}\ud z_1+\kappa_{2\btw}\ud z_2),\\
J\ud\bz_2&=2(\kappa_{1\bon}\ud z_1+\kappa_{2\bon}\ud z_2),\\
K\ud\bz_1&=2i(\kappa_{1\btw}\ud z_1+\kappa_{2\btw}\ud z_2),\\
K\ud\bz_2&=-2i(\kappa_{1\bon}\ud z_1+\kappa_{2\bon}\ud z_2),
\end{align*}
\end{minipage}
\vspace{0.08in}
~\\
which tell us how $I$, $J$ and $K$ acts on 1-forms.

As $Y=\ccc\times N$, we can think of $z_i$ as functions locally defined on $Y$, though they are no longer holomorphic. In the same manner, we shall record how the complex structure $\frI$ on $Y$ acts on 1-forms

\begin{minipage}{0.48\textwidth}
\[\begin{split}\frI\ud\zeta&=i\ud\zeta,\\ \frI\ud z_1&=i\alpha\ud z_1-2(\beta+i\gamma)(\kappa_{2\bon}\ud\bz_1+\kappa_{2\btw}\ud\bz_2), \\ \frI\ud z_2&=i\alpha\ud z_2+2(\beta+i\gamma)(\kappa_{1\bon}\ud\bz_1+\kappa_{1\btw}\ud\bz_2),\end{split}\]
\end{minipage}
~
\begin{minipage}{0.48\textwidth}
\[\begin{split}\frI\ud\bzeta&=-i\ud\bzeta,\\ \frI\ud\bz_1&=-i\alpha\ud\bz_1-2(\beta-i\gamma)(\kappa_{1\btw}\ud z_1+\kappa_{2\btw}\ud z_2),\\ \frI\ud\bz_2&=-i\alpha\ud\bz_2+2(\beta-i\gamma)(\kappa_{1\bon}\ud z_1+\kappa_{2\bon}\ud z_2).\end{split}\]
\end{minipage}
\vspace{0.08in}
~\\
Now consider a 1-form \[\theta=L\ud\zeta+A\ud z_1+B\ud z_2+C\ud\bz_1+D\ud\bz_2.\] If $\theta$ is of type (1,0), i.e., $\frI\theta=i\theta$, then we have \[\begin{split}A&=\frac{2i\kappa_{1\btw}}{\zeta}C-\frac{2i\kappa_{1\bon}}{\zeta}D,\\ B&=\frac{2i\kappa_{2\btw}}{\zeta}C-\frac{2i\kappa_{2\bon}}{\zeta}D.\end{split}\]

Let \[\theta_1=2i\frac{\kappa_{1\btw}}{\zeta}\ud z_1+2i\frac{\kappa_{2\btw}}{\zeta}\ud z_2+\ud\bz_1,\quad  \theta_2=2i\frac{\kappa_{1\bon}}{\zeta}\ud z_1+2i\frac{\kappa_{2\bon}}{\zeta}\ud z_2-\ud\bz_2.\] It is easy to see that they are (1,0)-forms. Moreover, we have \[\theta=L\ud\zeta+C\theta_1-D\theta_2.\]

\begin{lemma}
$\theta$ is a holomorphic $(1,0)$-form if and only if
\begin{equation}\label{loc}
2\zeta\bpt L=-C((1-\alpha)\theta_1-2\ud\bz_1)+D((1-\alpha)\theta_2+2\ud\bz_2)
\end{equation}
holds, where $C$ and $D$ satisfy
\begin{equation}\label{simp}
\begin{split}\bpt C&=i(\beta-i\gamma)[C(\kappa_{1\bon\btw}\overline{\theta}_1-\kappa_{2\bon\btw}\overline{\theta}_2) -D(\kappa_{1\bon\bon}\overline{\theta}_1-\kappa_{2\bon\bon}\overline{\theta}_2)]\\
\bpt D&=i(\beta-i\gamma)[C(\kappa_{1\btw\btw}\overline{\theta}_1-\kappa_{2\btw\btw}\overline{\theta}_2) -D(\kappa_{1\bon\btw}\overline{\theta}_1 -\kappa_{2\bon\btw}\overline{\theta}_2)].\end{split}
\end{equation}
\end{lemma}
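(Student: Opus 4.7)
The plan is to compute $\ud\theta$ directly and require its $(1,1)$-component with respect to $\frI$ to vanish. Since $\theta=L\,\ud\zeta+C\theta_1-D\theta_2$ is tautologically of type $(1,0)$ by the preceding discussion and $\frI$ is integrable, $\ud\theta$ decomposes into pieces of type $(2,0)$ and $(1,1)$ only, so holomorphicity of $\theta$ is equivalent to the vanishing of the $(1,1)$-part of $\ud\theta$.

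First, I would expand by Leibniz,
\[
\ud\theta \;=\; \ud L\wedge\ud\zeta + \ud C\wedge\theta_1 - \ud D\wedge\theta_2 + C\,\ud\theta_1 - D\,\ud\theta_2,
\]
and compute $\ud\theta_i$ in the coordinate basis $\{\ud\zeta,\ud z_j,\ud\bz_k\}$. Because the coefficient of $\ud z_j$ in $\theta_i$ is $2i\kappa_{j\bar{k}}/\zeta$, $\ud\theta_i$ splits as a piece $-2i\zeta^{-2}\,\ud\zeta\wedge(\cdots)$ coming from differentiating $1/\zeta$, plus a piece along $N$ built from the third derivatives $\kappa_{j\bar{k}\bar{l}}$; the potential $\ud z_j\wedge\ud z_k$ contributions cancel by the symmetry $\kappa_{jk\bar{l}}=\kappa_{kj\bar{l}}$. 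Moreover the elementary identity $\kappa_{1\bar{2}}\ud z_1+\kappa_{2\bar{2}}\ud z_2=\tfrac{\zeta}{2i}(\theta_1-\ud\bz_1)$, and its analogue for $\theta_2$, let me re-express the $\ud\zeta$-pieces of $\ud\theta_1$ and $\ud\theta_2$ in the $\frI$-adapted frame without introducing new unknowns.

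Second, I would convert everything into the adapted bases $\{\ud\zeta,\theta_1,\theta_2\}$ for $(1,0)$-forms and $\{\ud\bzeta,\overline{\theta}_1,\overline{\theta}_2\}$ for $(0,1)$-forms, by inverting the linear change of basis that expresses $\ud z_i,\ud\bz_i$ in this frame; the hyperk\"ahler Monge--Amp\`ere constraint $\kappa_{1\bar{1}}\kappa_{2\bar{2}}-\kappa_{1\bar{2}}\kappa_{2\bar{1}}=\tfrac14$ is precisely what makes this inversion explicit. The $(1,1)$-part of $\ud\theta$ then splits into the four blocks of coefficients $\ud\zeta\wedge\ud\bzeta$, $\ud\zeta\wedge\overline{\theta}_j$, $\theta_i\wedge\ud\bzeta$, and $\theta_i\wedge\overline{\theta}_j$. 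A short check using the listed $\frI$-action on $1$-forms shows that $(1-\alpha)\theta_1-2\,\ud\bz_1$ and $(1-\alpha)\theta_2+2\,\ud\bz_2$ are of pure type $(0,1)$; collecting the $\ud\zeta$-wedge contributions, which come from $\ud L\wedge\ud\zeta$ together with the $\ud\zeta$-piece of $C\,\ud\theta_1-D\,\ud\theta_2$, yields exactly (\ref{loc}). The remaining $\theta_i\wedge(\text{antiholomorphic})$ coefficients come from the fiber parts of $\ud C\wedge\theta_1-\ud D\wedge\theta_2$ and of $C\,\ud\theta_1-D\,\ud\theta_2$; after using $\beta+i\gamma=2\zeta/(1+|\zeta|^2)$ and its conjugate to absorb prefactors, they collapse into the system (\ref{simp}).

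The main obstacle is the bookkeeping in the second step: sorting the many coordinate-basis terms of $\ud\theta_1$ and $\ud\theta_2$ by bidegree and carrying out the change of basis cleanly. The key algebraic point is that the Monge--Amp\`ere constraint on $\kappa$, together with the symmetry of its mixed partials, causes all $(1,1)$-coefficients of $\ud\theta$ outside those already isolated in (\ref{loc}) and (\ref{simp}) to cancel identically, so no further conditions on $L$, $C$, $D$ appear.
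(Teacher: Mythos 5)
Your proposal is correct and follows essentially the same route as the paper: expand $\bpt\theta$ by Leibniz, compute $\bpt\theta_1,\bpt\theta_2$ explicitly, and split the resulting $(1,1)$-form equation into its $\ud\zeta\wedge(\cdot)$ part, which gives (\ref{loc}), and its fiber part. The only organizational difference lies in extracting (\ref{simp}) from the fiber equation: the paper first uses local solvability of (\ref{loc}) (i.e.\ $\bpt$-closedness of its right-hand side) to write $\bpt C,\bpt D$ in terms of three auxiliary functions $P,Q,R$ and then solves a small linear system, whereas you invert the coframe change of basis directly --- both work, and your observation that the six fiber components determine $\bpt C,\bpt D$ uniquely (forcing in particular the vanishing of their $\ud\bzeta$-components) is sound.
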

\begin{proof}
The proof follows from straightforward calculation. It is quite clear that $\theta$ is holomorphic if and only if \[\bpt L\wedge\ud t+\bpt C\wedge\theta_1-\bpt D\wedge\theta_2+C\bpt\theta_1-D\bpt\theta_2=0.\]

One can compute directly that
\[\begin{split}\pt\theta_1&=-\frac{1+\alpha}{2\zeta}\ud\zeta\wedge\theta_1,\\ \bpt\theta_1&=-\frac{\ud \zeta}{\zeta}\wedge\left(\frac{1-\alpha}{2}\theta_1-\ud\bz_1\right)-\frac{2i}{\zeta}(\kappa_{1\bon\btw}\ud z_1\wedge\ud\bz_1+\kappa_{1\btw\btw}\ud z_1\wedge\ud\bz_2+\kappa_{2\bon\btw}\ud z_2\wedge\ud\bz_1+\kappa_{2\btw\btw}\ud z_2\wedge\ud\bz_2),\\ \pt\theta_2&=-\frac{1+\alpha}{2\zeta}\ud\zeta\wedge\theta_2,\\ \bpt\theta_2&=-\frac{\ud \zeta}{\zeta}\wedge\left(\frac{1-\alpha}{2}\theta_2+\ud\bz_2\right)-\frac{2i}{\zeta}(\kappa_{1\bon\bon}\ud z_1\wedge\ud\bz_1+\kappa_{1\bon\btw}\ud z_1\wedge\ud\bz_2+\kappa_{2\bon\bon}\ud z_2\wedge\ud\bz_1+\kappa_{2\bon\btw}\ud z_2\wedge\ud\bz_2).\end{split}\]

Therefore the holomorphicity of $\theta$ is equivalent to (\ref{loc}) and
\begin{equation}\label{comp}
\begin{split}\frac{\zeta}{2i}(\bpt C\wedge\theta_1-\bpt D\wedge\theta_2)&=C(\kappa_{1\bon\btw}\ud z_1\wedge\ud\bz_1+\kappa_{1\btw\btw}\ud z_1\wedge\ud\bz_2+\kappa_{2\bon\btw}\ud z_2\wedge\ud\bz_1+\kappa_{2\btw\btw}\ud z_2\wedge\ud\bz_2)\\ &-D(\kappa_{1\bon\bon}\ud z_1\wedge\ud\bz_1+\kappa_{1\bon\btw}\ud z_1\wedge\ud\bz_2+\kappa_{2\bon\bon}\ud z_2\wedge\ud\bz_1+\kappa_{2\bon\btw}\ud z_2\wedge\ud\bz_2).\end{split}
\end{equation}
Observe that Equation (\ref{loc}) is (locally) solvable if and only if \[\bpt C\wedge((1-\alpha)\theta_1-2\ud\bz_1)=\bpt D\wedge((1-\alpha)\theta_2+2\ud\bz_2).\] Or in other words, there exists functions $P$, $Q$ and $R$ such that \[\begin{split}\bpt C&=P((1-\alpha)\theta_1-2\ud\bz_1)+Q((1-\alpha)\theta_2+2\ud\bz_2),\\ \bpt D&=-Q((1-\alpha)\theta_1-2\ud\bz_1)+R((1-\alpha)\theta_2+2\ud\bz_2).\end{split}\]

Plug them in Equation (\ref{comp}) we get \[\begin{split}2P\kappa_{1\btw}+2Q\kappa_{1\bon}&=C\kappa_{1\bon\btw}-D\kappa_{1\bon\bon},\\ 2R\kappa_{1\bon}-2Q\kappa_{1\btw}&=C\kappa_{1\btw\btw}-D\kappa_{1\bon\btw},\\ 2P\kappa_{2\btw}+2Q\kappa_{2\bon}&=C\kappa_{2\bon\btw}-D\kappa_{2\bon\bon},\\ 2R\kappa_{2\bon}-2Q\kappa_{2\btw}&=C\kappa_{2\btw\btw}-D\kappa_{2\bon\btw}.\end{split}\]

These equations can be solved explicitly \[\begin{split}P&=2C(\kappa_{1\bon}\kappa_{2\bon\btw}-\kappa_{2\bon}\kappa_{1\bon\btw}) +2D(\kappa_{2\bon}\kappa_{1\bon\bon}-\kappa_{1\bon}\kappa_{2\bon\bon}),\\ Q&=2C(\kappa_{2\btw}\kappa_{1\bon\btw}-\kappa_{1\btw}\kappa_{2\bon\btw}) +2D(\kappa_{1\btw}\kappa_{2\bon\bon}-\kappa_{2\btw}\kappa_{1\bon\bon}),\\ &=2C(\kappa_{2\bon}\kappa_{1\btw\btw}-\kappa_{1\bon}\kappa_{2\btw\btw}) +2D(\kappa_{1\bon}\kappa_{2\bon\btw}-\kappa_{2\bon}\kappa_{1\bon\btw}),\\ R&=2C(\kappa_{2\btw}\kappa_{1\btw\btw}-\kappa_{1\btw}\kappa_{2\btw\btw}) +2D(\kappa_{1\btw}\kappa_{2\bon\btw}-\kappa_{2\btw}\kappa_{1\bon\btw}).\end{split}\]

Substitute $P$, $Q$ and $R$ above back into (\ref{comp}), we get exactly (\ref{simp}).
\end{proof}

\begin{rmk}
The Equation (\ref{simp}) is consistent with itself in the sense that we do not get any further information from taking $\bpt$ of (\ref{simp}).
\end{rmk}

Now we are ready to compute the curvature term $\tr(R\wedge R)$ with respect to the metric (\ref{ansatz}). To do so, we shall let $s=1+|\zeta|^2$ and use $K=(\kappa^{i\bj})$ to denote the inverse matrix of $(\kappa_{i\bj})$. In addition, $\langle\cdot,\cdot\rangle$ is to be understood as the Hermitian metric associated with $\omega$.

Let $L_1\ud\zeta+C_1\theta_1-D_1\theta_2$ and $L_2\ud\zeta+C_2\theta_1-D_2\theta_2$ be locally defined linearly independent holomorphic $(1,0)$-forms. It is not hard to compute that \[\begin{split}\langle\ud\zeta,\ud\zeta\rangle&=\frac{s^2}{2e^{2g}},\\ \langle\theta_1,\theta_1\rangle&=\frac{s^2}{e^{2h+g}}\left(\kappa^{1\bar{1}}+\frac{4}{|\zeta|^2}\kappa_{2\bar{2}}\right) =\frac{s^3}{|\zeta|^2e^{2h+g}}\kappa^{1\bar{1}},\\ \langle\theta_1,\theta_2\rangle&=-\frac{s^3}{|\zeta|^2e^{2h+g}}\kappa^{1\bar{2}}\\ \langle\theta_2,\theta_2\rangle&=\frac{s^3}{|\zeta|^2e^{2h+g}}\kappa^{2\bar{2}}.\end{split}\]
If we consider the local frame \[\{\ud\zeta, (\zeta L_1)\ud\zeta+C_1(\zeta\theta_1)+D_1(-\zeta\theta_2),(\zeta L_2)\ud\zeta+C_2(\zeta\theta_1)+D_2(-\zeta\theta_2)\},\] then the Gram matrix $H$ is given by \[H=\frac{s^2}{2e^{2g}}\begin{pmatrix}1\\ L\end{pmatrix}\begin{pmatrix}1&\bar{L}^T\end{pmatrix}+\frac{s^3}{e^{2h+g}}\begin{pmatrix}0&0\\ 0&EK\bar{E}^T\end{pmatrix},\] where \[L=\begin{pmatrix}\zeta L_1\\ \zeta L_2\end{pmatrix}\] and \[E=\begin{pmatrix}C_1 & D_1\\ C_2 & D_2\end{pmatrix}.\]

Let $U=EK\bar{E}^T$, notice that \[\det H=\frac{s^8}{2e^{4g+4h}}\det U,\] so we can compute that \[H^{-1}=\frac{2e^{2g}}{s^2}\begin{pmatrix}1&0&0\\0&0&0\\ 0&0&0\end{pmatrix}+\frac{e^{2h+g}}{s^3}\begin{pmatrix}-\bar{L}^T\\ I_2\end{pmatrix}U^{-1}\begin{pmatrix}-L&I_2\end{pmatrix},\] where $I_2$ is the $2\times 2$ identity matrix. Write \[A=\frac{s^2}{2e^{2g}}~\textrm{ and }~B=\frac{s^3}{e^{2h+g}}.\] It follows that \[\begin{split}\pt\bar{H}&=\pt A\begin{pmatrix}1\\ \bar{L}\end{pmatrix}\begin{pmatrix}1& L^T\end{pmatrix}+A\begin{pmatrix}0\\ \pt\bar{L}\end{pmatrix}\begin{pmatrix}1& L^T\end{pmatrix}+A\begin{pmatrix}1\\ \bar{L}\end{pmatrix}\begin{pmatrix}0& \pt L^T\end{pmatrix}+\pt B\begin{pmatrix}0&0\\ 0&\bar{U}\end{pmatrix}+B\begin{pmatrix}0&0\\ 0&\pt\bar{U}\end{pmatrix},\\ \bar{H}^{-1}\pt\bar{H}&=\pt\log A\begin{pmatrix}1&L^T\\ 0&0\end{pmatrix}+\begin{pmatrix}0&\pt L^T\\ 0&0\end{pmatrix}+\frac{A}{B}\begin{pmatrix}-L^T\\ I_2\end{pmatrix}\bar{U}^{-1}\pt\bar{L}\begin{pmatrix}1&L^T\end{pmatrix}+\pt\log B\begin{pmatrix}0& -L^T\\ 0& I_2\end{pmatrix}\\ &+\begin{pmatrix}-L^T\\ I_2\end{pmatrix}\begin{pmatrix}0& \bar{U}^{-1}\pt\bar{U}\end{pmatrix}.\end{split}\]

As a consequence, \[\begin{split}R&=\bpt(\bar{H}^{-1}\pt\bar{H})\\ &=\bpt\pt\log A\begin{pmatrix}1&L^T\\ 0&0\end{pmatrix}-\pt\log A\begin{pmatrix}0&\bpt L^T\\ 0&0\end{pmatrix}+\begin{pmatrix}0&\bpt\pt L^T\\ 0&0\end{pmatrix}+\bpt\left(\frac{A}{B}\right)\begin{pmatrix}-L^T\\ I_2\end{pmatrix}\bar{U}^{-1}\pt\bar{L}\begin{pmatrix}1&L^T\end{pmatrix}\\ &-\frac{A}{B}\begin{pmatrix}\bpt L^T\\ 0\end{pmatrix}\bar{U}^{-1}\pt\bar{L}\begin{pmatrix}1&L^T\end{pmatrix}+\frac{A}{B}\begin{pmatrix}-L^T\\ I_2\end{pmatrix}\bpt(\bar{U}^{-1}\pt\bar{L})\begin{pmatrix}1&L^T\end{pmatrix}-\frac{A}{B}\begin{pmatrix}-L^T\\ I_2\end{pmatrix}\bar{U}^{-1}\pt\bar{L}\begin{pmatrix}0&\bpt L^T\end{pmatrix}\\ &+\bpt\pt\log B\begin{pmatrix}0& -L^T\\ 0& I_2\end{pmatrix}+\pt\log B\begin{pmatrix}0& \bpt L^T\\ 0& 0\end{pmatrix}-\begin{pmatrix}\bpt L^T\\ 0\end{pmatrix}\begin{pmatrix}0& \bar{U}^{-1}\pt\bar{U}\end{pmatrix}+\begin{pmatrix}-L^T\\ I_2\end{pmatrix}\begin{pmatrix}0& \bpt(\bar{U}^{-1}\pt\bar{U})\end{pmatrix}.\end{split}\]

A lengthy calculation shows that
\begin{equation}\label{c1}
\tr(R)=\bpt\pt\log A+2\bpt\pt\log B+\tr(\bpt(\bar{U}^{-1}\pt\bar{U}))
\end{equation}
and
\begin{equation}\label{c2comp}
\tr(R\wedge R)=2\pt\bpt\left(\frac{A}{B}W\right)+2(\bpt\pt\log B)^2+2\bpt\pt\log B\wedge\tr(\bpt(\bar{U}^{-1}\pt\bar{U}))+\tr(\bpt(\bar{U}^{-1}\pt\bar{U})^2),
\end{equation}
where $W=\bpt L^T\bar{U}^{-1}\pt\bar{L}$ and we have used the fact that $(\bpt\pt\log A)^2=0$.

Recall from (\ref{mod}) that if $\|\Omega\|_\omega$ is a constant, or equivalently, $e^{h+g}=s^2$, then $\tr(R)=0$. From this and (\ref{c1}) we can deduce that
\begin{equation}\label{trace}
\tr(\bpt(\bar{U}^{-1}\pt\bar{U}))=0.
\end{equation}

From the (restricted) twistor fibration $p:Y\to\ccc$, we have a short exact sequence of holomorphic vector bundles \[0\to p^*T^*\ccc\to T^*Y\xrightarrow{q}E'=T^*Y/p^*T^*\ccc\to0.\] Moreover, the matrix $U$ is the Gram matrix of the holomorphic frame of $E'$ induced by \[\{(\zeta L_1)\ud\zeta+C_1(\zeta\theta_1)+D_1(-\zeta\theta_2),(\zeta L_2)\ud\zeta+C_2(\zeta\theta_1)+D_2(-\zeta\theta_2)\}\] with respect to the natural metric scaled by $s$. Let $F'$ be the curvature form of $E'$ with respect to this metric, then we have
\begin{equation}\label{c2'}
F'=\bpt(\bar{U}^{-1}\pt\bar{U}).
\end{equation}

By making use of (\ref{trace}) and (\ref{c2'}), we can simplify (\ref{c2comp}) to
\begin{equation}
\label{c2simp}\tr(R\wedge R)=2\pt\bpt\left(\frac{A}{B}W\right)+2(\bpt\pt\log B)^2+\tr(F'\wedge F').
\end{equation}

The quantity $W$ can be computed explicitly using (\ref{loc}). Notice that \[2\bpt L^T=\begin{pmatrix}2\ud\bz_1-(1-\alpha)\theta_1 & 2\ud\bz_2+(1-\alpha)\theta_2\end{pmatrix}E^T,\] it follows that \[\begin{split}W&=\frac{1}{4}\begin{pmatrix}2\ud\bz_1-(1-\alpha)\theta_1 & 2\ud\bz_2+(1-\alpha)\theta_2\end{pmatrix}\bar{K}^{-1}\begin{pmatrix}2\ud z_1-(1-\alpha)\bar{\theta}_1 \\ 2\ud z_2+(1-\alpha)\bar{\theta}_2\end{pmatrix}\\ &=\frac{2i}{s}(\alpha\omega_I+\beta\omega_J+\gamma\omega_K).\end{split}\]

Consequently, the anomaly cancellation equation (\ref{ac}) reduces to
\begin{equation}\label{acsimp}
\begin{split}
&i\pt\bpt\left(\frac{e^{2h+g}}{s^2}(\alpha\omega_I+\beta\omega_J+\gamma\omega_K)\right)\\ =& \frac{\alpha'}{4}\left[2i\pt\bpt\left(\frac{e^{2h-g}}{s^2}(\alpha\omega_I+\beta\omega_J+\gamma\omega_K)\right)+2(\bpt\pt\log B)^2+\tr(F'\wedge F')-\tr(F\wedge F)\right]
\end{split}
\end{equation}
and we are free to choose functions $g$ and $h$.

The simplest way to let (\ref{acsimp}) hold is to take $g=\dfrac{1}{2}\log\dfrac{\alpha'}{2}$, $F=F'$ and choose appropriate $h$ such that
\begin{equation}\label{sq}
(\bpt\pt\log B)^2=0.
\end{equation}
Recall that $B=s^3/e^{2h+g}$, hence (\ref{sq}) holds trivially if $h$ is a constant say $h\equiv0$. In this case, the metric (\ref{ansatz}) is conformal to the product of hyperk\"ahler metric on $N$ and the Euclidean metric on $\ccc$, hence conformally Ricci-flat. It should be pointed out that this metric is not complete, however this does not raise any problem for the use of gluing. Intuitively we can think of $\omega$ as a metric on the singular space $\ccc\pp^1\times N/\{\infty\}\times N$.

It is also possible that (\ref{sq}) holds for nonconstant $h$. To find such $h$, one usually needs to know the explicit hyperk\"ahler metric on $N$. We will give an example of this kind in Section \ref{s4}.

\subsection{The Hermitian-Yang-Mills Equation (\ref{hym})}

In \cite{fei2015c}, $N$ is taken to be flat $T^4$ hence $F'=F=0$ and (\ref{hym}) holds automatically. In general, neither $N$ is flat nor $F'$ trivial. In order to solve the whole Strominger, we shall prove
\begin{thm}\label{thym}
$F'$ solves the Hermitian-Yang-Mills equation, i.e., \[F'\wedge\omega^2=0\] for arbitrary $g$ and $h$.
\end{thm}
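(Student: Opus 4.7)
The plan is to reduce the HYM equation $F'\wedge\omega^2=0$ to two simpler identities and verify each with an independent argument.

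First I would expand the ansatz using $\omega_{\ccc\pp^1}^2=0$ together with the standard orthogonality relations $\omega_I\wedge\omega_J=\omega_J\wedge\omega_K=\omega_K\wedge\omega_I=0$ to get
\[
\omega^2 = 2\frac{e^{4h+2g}}{s^4}\vol_N + 2\frac{e^{2h+3g}}{s^2}\omega_1\wedge\omega_{\ccc\pp^1},
\]
where $\omega_1:=\alpha\omega_I+\beta\omega_J+\gamma\omega_K$ and $s=1+|\zeta|^2$. The crucial structural point is that $F'=\bpt(\bar U^{-1}\pt\bar U)$ with $U=EK\bar E^T$ is built from purely twistor-geometric data (the hyperk\"ahler structure of $N$ together with the parameter $\zeta$) and is thus independent of $g$ and $h$. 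Since $g$ depends only on $\zeta$ while $h$ depends only on the $N$-variables, the two coefficients above span an essentially two-parameter family, so the theorem is equivalent to the pair of identities
\[
(\mathrm{I})\ F'\wedge\vol_N=0,\qquad (\mathrm{II})\ F'\wedge\omega_1\wedge\omega_{\ccc\pp^1}=0.
\]

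For (II), I would restrict the exact sequence $0\to p^*T^{*(1,0)}\ccc\to T^{*(1,0)}Y\to E'\to 0$ to a twistor fiber $F_\zeta\cong N$; this identifies $E'|_{F_\zeta}$ with $T^{*(1,0)}F_\zeta$, and the induced Hermitian metric agrees with the hyperk\"ahler one up to the fiber-constant factor $s$, which does not affect the Chern connection along $F_\zeta$. Since hyperk\"ahler metrics are K\"ahler Ricci-flat, the Chern curvature of the (co)tangent bundle on this complex surface is Hermitian-Einstein with vanishing Einstein constant, i.e.\ $F'|_{F_\zeta}\wedge\omega_1|_{F_\zeta}=0$. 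A degree count using $\dim_\rr N=4$ shows that only the purely $N$-directional component of $F'$ contributes to $F'\wedge\omega_1\wedge\omega_{\ccc\pp^1}$, and this component equals $F'|_{F_\zeta}$ fiberwise, yielding (II).

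For (I), a degree count reduces the task to showing that the $\ud\zeta\wedge\ud\bzeta$-coefficient of $F'$, regarded as a matrix of functions in product coordinates on $Y=\ccc\times N$, vanishes identically. The key unlock is that the right-hand side of equation (\ref{simp}) has no $\ud\bzeta$-component, so the frame functions $C_i,D_i$ — and thus the entries of $E^T$ — are holomorphic in $\zeta$; similarly $K$ depends only on the $N$-variables. It follows that $\pt\bar E$ and $\pt\bar K$ both carry no $\ud\zeta$-leg, and the $\ud\zeta$-part of the connection form $\bar U^{-1}\pt\bar U$ collapses to $(E^T)^{-1}E^T_\zeta\,\ud\zeta=:\Phi\,\ud\zeta$ with $\Phi$ again holomorphic in $\zeta$. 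The contribution of this to the $\ud\zeta\wedge\ud\bzeta$-coefficient of $F'=\bpt(\bar U^{-1}\pt\bar U)$ is $-\pt_{\bzeta}\Phi=0$, while the explicit formulas recorded earlier for $\bpt\theta_i$ contain no $\ud\zeta\wedge\ud\bzeta$ term, so the $\theta_i$-legs of the connection form contribute nothing to that coefficient either. This establishes (I); the main delicacy of the whole proof lies precisely in this last bookkeeping, and the substantive point is that (\ref{simp}) secretly encodes the $\zeta$-holomorphicity which kills the transverse curvature.
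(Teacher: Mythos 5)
Your proposal is correct and follows essentially the same route as the paper: the paper's proof likewise reduces to showing that $F'$ has no $\Omega^{1,1}(\ccc)$-component (your identity (I), proved there as a lemma from the fact that equation (\ref{simp}) gives $\bpt E$ no $\Omega^1(\ccc)$-leg) and that the $\Omega^{1,1}(N)$-part wedges to zero against $\omega_\zeta$ fiberwise because hyperk\"ahler 4-manifolds are anti-self-dual (your identity (II), which you phrase equivalently via Ricci-flatness of the fiber metric). Your explicit remark that the fiber-constant factor $s$ does not affect the Chern connection along the fibers is a detail the paper leaves implicit, but the argument is the same.
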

By the product structure $Y=\ccc\times N$, we have the decomposition of space of 2-forms \[\Omega^2(Y)=\Omega^2(\ccc)\oplus\Omega^1(\ccc)\otimes\Omega^1(N)\oplus\Omega^2(N).\] Moreover the complex structure $\frI$ is compatible with this decomposition, so we have \[\Omega^{1,1}(Y)=\Omega^{1,1}(\ccc)\oplus\Omega^{1,0}(\ccc)\otimes\Omega^{0,1}(N)\oplus\Omega^{0,1}(\ccc)\otimes\Omega^{1,0}(N) \oplus\Omega^{1,1}(N).\]
We first prove the following lemma
\begin{lemma}
Every entry of $F'$ is contained in the space \[\Omega^{1,0}(\ccc)\otimes\Omega^{0,1}(N)\oplus\Omega^{0,1}(\ccc)\otimes\Omega^{1,0}(N) \oplus\Omega^{1,1}(N)\].
\end{lemma}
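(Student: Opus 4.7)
The plan is to show directly that the $\ud\zeta\wedge\ud\bzeta$ component of $F' = \bpt(\bar{U}^{-1}\pt\bar{U})$ vanishes, since that is the only piece that sits in $\Omega^{1,1}(\ccc)$. The crucial observation to extract from the preceding computation is that the coefficient functions $C_i, D_i$ making up the entries of $E$ are holomorphic in $\zeta$ in the sense that $\pt C_i/\pt\bzeta = \pt D_i/\pt\bzeta = 0$.

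First I would read off this $\zeta$-holomorphicity from equation (\ref{simp}). The forms $\bar\theta_1, \bar\theta_2$ are complex conjugates of the $(1,0)$-forms $\theta_1, \theta_2$, and neither $\theta_j$ contains any $\ud\bzeta$ term (they are built from $\ud z_1, \ud z_2, \ud\bz_1, \ud\bz_2$). Consequently $\bar\theta_1, \bar\theta_2$ contain no $\ud\bzeta$ term either, so the right hand side of (\ref{simp}) is supported on $\ud\bz_1, \ud\bz_2$. Matching $\ud\bzeta$ coefficients on both sides forces $\pt C/\pt\bzeta = \pt D/\pt\bzeta = 0$, and applying this to each independent holomorphic $(1,0)$-form in the frame shows that the matrix $E$ depends holomorphically on $\zeta$.

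Next I would isolate the $\ud\zeta$-coefficient of $\bar{U}^{-1}\pt\bar{U}$. Writing $\bar{U} = \bar{E}\,\bar{K}\,E^T$, noting that $\bar{K}$ is independent of $\zeta$ and that $\bar{E}$ is antiholomorphic in $\zeta$ (so $\pt\bar{E}/\pt\zeta = 0$), the only contribution to $\pt\bar{U}/\pt\zeta$ comes from $\bar{E}\,\bar{K}\,\pt E^T/\pt\zeta$. A short cancellation with $\bar{U}^{-1} = (E^T)^{-1}\bar{K}^{-1}\bar{E}^{-1}$ then gives
\[
\bar{U}^{-1}\pt\bar{U}\;\text{restricted to the }\ud\zeta\text{-coefficient}\;=\;(E^T)^{-1}\,\frac{\pt E^T}{\pt\zeta}.
\]

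Finally, the $\ud\bzeta\wedge\ud\zeta$ part of $F' = \bpt(\bar{U}^{-1}\pt\bar{U})$ is obtained by applying $\pt/\pt\bzeta$ to the matrix above. Because both $(E^T)^{-1}$ and $\pt E^T/\pt\zeta$ are holomorphic in $\zeta$ (by Step 1), this derivative vanishes identically. Hence $F'$ has no $\ud\zeta\wedge\ud\bzeta$ term, which is exactly the assertion that $F'$ lies in $\Omega^{1,0}(\ccc)\otimes\Omega^{0,1}(N)\oplus\Omega^{0,1}(\ccc)\otimes\Omega^{1,0}(N)\oplus\Omega^{1,1}(N)$. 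I do not expect any serious obstacle: the whole argument is a matter of bookkeeping once the holomorphicity of $C_i, D_i$ in $\zeta$ has been read off from (\ref{simp}), and the algebraic structure $U = EK\bar{E}^T$ does the rest.
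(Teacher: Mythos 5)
Your proposal is correct and follows essentially the same route as the paper: both arguments rest on reading off from (\ref{simp}) that $\bpt E$ has no $\Omega^{0,1}(\ccc)$-component (equivalently, that $E$ is annihilated by $\pt/\pt\bzeta$) together with the fact that $K$ depends only on $N$, and then tracking how this kills the $\ud\zeta\wedge\ud\bzeta$ part of $\bpt(\bar{U}^{-1}\pt\bar{U})$. The only cosmetic difference is that you isolate the single offending coefficient $(E^T)^{-1}\pt E^T/\pt\zeta$ while the paper argues term by term on the three pieces of the product-rule expansion of $\bar{U}^{-1}\pt\bar{U}$.
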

\begin{proof}
Recall that $U=EK\bar{E}^T$, so we have \[F'=\bpt(\bar{U}^{-1}\pt\bar{U})=\bpt((E^T)^{-1}\bar{K}^{-1}\bar{E}^{-1}\pt\bar{E}\cdot\bar{K}E^T)+\bpt( (E^T)^{-1}\bar{K}^{-1}\pt\bar{K}\cdot E^T)+\bpt((E^T)^{-1}\pt E^T)).\] Observe that (\ref{simp}) can be rewritten as \[\bpt E=i(\beta-i\gamma)E\cdot\begin{pmatrix}\kappa_{1\bon\btw}\overline{\theta}_1-\kappa_{2\bon\btw}\overline{\theta}_2 & \kappa_{1\btw\btw}\overline{\theta}_1-\kappa_{2\btw\btw}\overline{\theta}_2\\ \kappa_{2\bon\bon}\overline{\theta}_2-\kappa_{1\bon\bon}\overline{\theta}_1 & \kappa_{2\bon\btw}\overline{\theta}_2-\kappa_{1\bon\btw}\overline{\theta}_1\end{pmatrix},\] which does not have any component from $\Omega^1(\ccc)$. Similarly $\pt\bar{K}$ contains only components in $\Omega^{1,0}(N)$. The lemma follows directly from these two observations.
\end{proof}
Now we proceed to prove Theorem \ref{thym}
\begin{proof}
From (\ref{bal}), we see that $\omega^2$ lives in the space \[\Omega^2(\ccc)\otimes\Omega^2(N)\oplus\Omega^4(N).\] Therefore the only component of $F'$ that would contribute in $F'\wedge\omega^2$ is its $\Omega^{1,1}(N)$-part. The key point is that the $\Omega^{1,1}(N)$-part of $F'$ can be computed fiberwise. Fix $\zeta\in\ccc$ and let $N_\zeta$ be the fiber of the twistor fibration over $\zeta$. Notice that $(N_\zeta,\omega_\zeta:=\alpha\omega_I+\beta\omega_J+\gamma\omega_K)$ is hyperk\"ahler. Moreover, $E'|_{N_{\zeta}}$ is the cotangent bundle of $N_\zeta$ with hyperk\"ahler metric. It is a well-known fact that hyperk\"ahler 4-manifolds are anti-self-dual, thus \[F'_{\Omega^{1,1}(N)}\wedge\omega_\zeta=0.\] The theorem follows directly.
\end{proof}

In summary, for any hyperk\"ahler 4-manifold $N$, we can construct a noncompact 3-fold $Y$ with trivial canonical bundle, which can be obtained by removing an arbitrary fiber of the twistor fibration $p:Z\to\ccc\pp^1$ from $Z$. It is worth mentioning that the structure of $Y$ depends on the choice of fiber. The main theorem we have proved is the following.
\begin{thm}
The whole Strominger system (\ref{hym}, \ref{ac}, \ref{cb}) can be solved on $Y$ for arbitrary $N$ hyperk\"ahler.
\end{thm}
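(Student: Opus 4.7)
The plan is to assemble the three ingredients already developed in Sections 3.1--3.3 into a single explicit solution, using the ansatz metric \eqref{ansatz} together with the bundle $(E',F')$ that arose naturally from the twistor geometry. Concretely, I would fix the free parameters by taking $h \equiv 0$, $g = \tfrac{1}{2}\log\tfrac{\alpha'}{2}$ (a constant), and choosing the Hermitian holomorphic bundle $(E,h_E)$ to be the quotient bundle $E' = T^*Y / p^*T^*\ccc$ with the induced Hermitian metric scaled by $s = 1+|\zeta|^2$ described above \eqref{c2'}, so that $F = F'$.

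First, I would verify the conformally balanced equation \eqref{cb}. This requires no extra work: the computation of $\|\Omega\|_\omega$ in \eqref{mod} and $\omega^2$ in \eqref{bal} shows that $\|\Omega\|_\omega \cdot \omega^2$ is a closed form (indeed, the $\vol_N$-term is pulled back from $N$, and the other term has $\ud$ of a $\ccc\pp^1$-factor that vanishes by inspection), so \eqref{cb} holds for arbitrary smooth $g$ and $h$ and in particular for our constant choice.

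Next, I would check the anomaly cancellation \eqref{ac}. Since $h$ is constant and $g$ is constant, the quantity $B = s^3/e^{2h+g}$ depends only on $|\zeta|$ and $i\pt\bpt\log B$ is a $(1,1)$-form on $\ccc$, so $(\bpt\pt\log B)^2 = 0$, i.e.\ \eqref{sq} holds trivially. With these choices, the reduced equation \eqref{acsimp} becomes a tautology once we further set $F = F'$ so that $\tr(F\wedge F) = \tr(F'\wedge F')$: the coefficient $\tfrac{\alpha'}{4}\cdot 2\cdot\tfrac{e^{2h-g}}{s^2}$ matches $\tfrac{e^{2h+g}}{s^2}$ precisely because $e^{2g} = \alpha'/2$. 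Hence \eqref{ac} is satisfied identically.

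Finally, the Hermitian-Yang-Mills equation \eqref{hym} is exactly the content of Theorem \ref{thym} applied to our choice $F = F'$. I do not expect any genuine obstacle here since the heavy lifting---the curvature computation culminating in \eqref{c2simp}, the explicit evaluation of $W$, and the fiberwise anti-self-duality argument of Theorem \ref{thym}---has already been carried out. The only thing worth double-checking is that $(E',h_{E'})$ is a legitimate Hermitian holomorphic bundle on all of $Y$ (not merely locally), which follows because $E' = T^*Y/p^*T^*\ccc$ is globally defined as a holomorphic quotient and the fiber metric on $E'$ is induced from $\omega$ itself after the stated conformal scaling by $s$.
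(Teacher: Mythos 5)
Your proposal is correct and follows essentially the same route as the paper: the conformally balanced equation holds for arbitrary $g,h$ by \eqref{mod} and \eqref{bal}, the choices $g=\tfrac12\log\tfrac{\alpha'}{2}$, $h\equiv0$, $F=F'$ reduce \eqref{acsimp} to the trivially satisfied \eqref{sq}, and Theorem \ref{thym} supplies the Hermitian--Yang--Mills equation. Nothing essential is missing.
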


\section{Some Examples}\label{s4}

In this section, we will investigate the cases $N=\rr^4$ and $N=T^*\ccc\pp^1$ in detail.

\subsection{$N=\rr^4$}

Identify $\rr^4$ with the space of quaternions $\hh$, then left multiplication by $i$, $j$ and $k$ defines the standard hyperk\"ahler structure on $\rr^4$. We can construct the space $Y$ as in Section \ref{s3} by removing the fiber with complex structure $-I$ at infinity. Actually, $Y$ is biholomorphic to $\ccc^3$. An explicit isomorphism  $f:\ccc^3=\ccc\times\ccc^2\to Y=\ccc\times\rr^4$ can be written down as \[(\zeta,u_1,u_2)=f(\zeta,w_1,w_2)=(\zeta,\frac{w_1-i\zeta\bw_2}{1+|\zeta|^2},\frac{w_2+i\zeta\bw_1}{1+|\zeta|^2}).\] Or conversely, \[\begin{cases}w_1&=u_1+i\zeta\bu_2\\ w_2&=u_2-i\zeta\bu_1\end{cases},\] where $u_1=x^1+ix^2$, $u_2=x^3+ix^4$ and $u^1+u^2j=x^1+x^2i+x^3j+x^4k$ parameterizes $\hh=\rr^4$. In terms of the holomorphic coordinate $\{\zeta,w_1,w_2\}$ on $\ccc^3$, the 2-form $\alpha\omega_I+\beta\omega_J+\gamma\omega_K$ can be expressed as
\begin{equation}\label{exp}
\begin{split}\alpha\omega_I+\beta\omega_J+\gamma\omega_K&=\frac{i}{2s}(\ud w_1\wedge\ud\bw_1+\ud w_2\wedge\ud\bw_2+(|u_1|^2+|u_2|^2)\ud\zeta\wedge\ud\bzeta\\ &+iu_2\ud w_1\wedge\ud\bzeta-iu_1\ud w_2\wedge\ud\bzeta-i\bu_2\ud\zeta\wedge\ud\bw_1+i\bu_1\ud\zeta\wedge\ud\bw_2).\end{split}
\end{equation}

Now let us consider Equation (\ref{sq}) \[(\bpt\pt\log B)^2=0.\] Notice that \[\bpt\pt\log B=(3\bpt\pt\log s-\bpt\pt g)-2\bpt\pt h,\] hence (\ref{sq}) is equivalent to that
\begin{equation}\label{sqs}
(\bpt\pt h)^2=\bpt\pt h\wedge(3\bpt\pt\log s-\bpt\pt g).
\end{equation}

For simplicity, let us assume that $h:\rr^4\to\rr$ is a radial function, i.e., $h=h(\rho)$ where \[\rho=|u_1|^2+|u_2|^2=(x^1)^2+(x^2)^2+(x^3)^2+(x^4)^2.\] Therefore \[\ud h=h'\ud\rho\] and \[\begin{split}2i\bpt\pt h&=\ud\frI\ud h\\ &=h''\ud\rho\wedge(\alpha I+\beta J+\gamma K)\ud\rho+h'(\ud\alpha\wedge I\ud\rho+\ud\beta\wedge J\ud\rho+\ud\gamma\wedge K\ud\rho)-4h'(\alpha\omega_I+\beta\omega_J+\gamma\omega_K).\end{split}\]
One can verify that
\[\begin{split}-4(\bpt\pt h)^2&=32h'(h'+h''\rho)\vol_{\rr^4}-2(h')^2(\alpha J\ud\rho\wedge K\ud\rho+\beta K\ud\rho\wedge I\ud\rho+\gamma I\ud\rho\wedge J\ud\rho)\wedge\omega_{\ccc\pp_1}\\ &+8h'(h'+h''\rho)((\alpha\ud\beta-\beta\ud\alpha)\wedge*K\ud\rho+(\beta\ud\gamma-\gamma\ud\beta)\wedge*I\ud\rho+(\gamma\ud\alpha -\alpha\ud\gamma)\wedge*J\ud\rho),\end{split}\] where $*$ is the Hodge star operator on $\rr^4$. Also we have
\[4\bpt\pt h\wedge(3\bpt\pt\log s-\bpt\pt g)=(h''\ud\rho\wedge(\alpha I+\beta J+\gamma K)\ud\rho-4h'(\alpha\omega_I+\beta\omega_J+\gamma\omega_K))\wedge(2i\bpt\pt\log g+3\omega_{\ccc\pp^1}).\]
Notice that we have the identity \[\ud\rho\wedge I\ud\rho+4\rho\cdot\omega_I=J\ud\rho\wedge K\ud\rho\] and the like. It follows that assuming $g$ is a constant, then (\ref{sqs}) holds if and only if $h'=0$ or $h'=\dfrac{-3}{2\rho}$. In both cases we solve the Strominger system.
\begin{enumerate}
\item The case $h'=0$.

The metric on $Y$ is essentially of the form \[\omega=\frac{1}{s^2}\left(\alpha\omega_I+\beta\omega_J+\gamma\omega_K+\frac{i}{2}\ud\zeta\wedge\ud\bzeta\right),\] which is conformal to the Euclidean metric on $Y=\ccc\times\rr^4$. However this metric has rather complicated expression (see (\ref{exp})) in standard complex coordinates on $\ccc^3$. Direct calculation shows that though its sectional curvature is not bounded from below, however it is bounded from above by a positive constant. Same conclusion applies to Ricci and scalar curvatures as well.

\item The case $h'=\dfrac{-3}{2\rho}$.

In this case $e^h\sim\rho^{-3/2}$, so the metric $\omega$ is only defined on $\ccc\times(\rr^4\setminus\{0\})\cong\ccc\times(\ccc^2\setminus\{0\})$. On each copy of $\rr^4\setminus\{0\}$, the restricted metric is conformally flat, with non-positive sectional curvature. The curvature property of $\ccc\times(\rr^4\setminus\{0\})$ is like what we have in the $h'=0$ case.
\end{enumerate}

\subsection{$N=T^*\ccc\pp^1$}

The hyperk\"ahler structure on $T^*\ccc\pp^1$ is known as the Eguchi-Hanson \cite{eguchi1978} geometry, which was also discovered by Calabi \cite{calabi1979} independently. This is the simplest example of ALE space which can be constructed by taking resolution of $\ccc^2/\zz_2$. Now let $N$ be the Eguchi-Hanson space. As we have seen from Proposition \ref{resolved} that if we choose the fiber removed to be special, then $Y$ is can be identified with the resolved conifold.

As we know, setting $h\equiv\textrm{const}$ gives a solution to the Strominger system which is conformal to the product metric. Its curvature behaviors are very similar to the corresponding $N=\rr^4$ case.

One may want to ask if there is a nonconstant $h$ such that (\ref{sq}), or equivalently (\ref{sqs}) holds. We may want to make a reduction as before by assuming that $h$ depends only on $R$, where $R:T^*\ccc\pp^1\to\rr$ is the norm square function of cotangent fibers with respect to the Fubini-Study metric $\omega_{\ccc\pp^1}$. Unfortunately in such a case we do not get any extra solutions by this consideration.

\section{On Chern-Ricci Flat Balanced Metrics}\label{s5}

Let $X$ be a K\"ahler manifold with trivial canonical bundle. People are very interested in the question that if $M$ admits a (complete) Ricci-flat K\"ahler metric or not. For $X$ compact, this was answered affirmatively by Yau's famous solution to the Calabi conjecture \cite{yau1977, yau1978}. However, we do not know enough about this problem when $X$ is noncompact. For instance, let $M$ be any K\"ahler manifold, then we know that the total space of the canonical bundle of $M$, denoted by $K_M$, is K\"ahler and has itself trivial canonical bundle. So it makes sense to ask when does $K_M$ support a K\"ahler Ricci-flat metric. This problem was first attacked by Calabi \cite{calabi1979}, where he showed that if $M$ admits a K\"ahler-Einstein metric, then one can write down a K\"ahler Ricci-flat metric on $K_M$. A relative recent progress in this direction was made by Futaki \cite{futaki2007}, where he showed that such a metric exists if $M$ is toric Fano.

The Calabi ansatz can be rephrased as follows. Let $(M,\omega)$ be a K\"ahler manifold. By choosing a set of holomorphic coordinates $\{z^1,\dots,z^n\}$ on $M$, we can trivialize $K_M$ by $\ud z^1\wedge\dots\wedge\ud z^n$ locally. Let $t$ parameterizes the fiber of $K_M$ under this trivialization, then $\{z^1,\dots,z^n,t\}$ forms a set of holomorphic coordinates on $K_M$ and \[\Omega=\ud z^1\wedge\dots\wedge\ud z^n\wedge\ud t\] is a globally defined nowhere vanishing holomorphic volume form.

In terms of coordinate $\{z^1,\dots,z^n\}$, the K\"ahler form $\omega$ can be written as \[\omega=ih_{j\bk}\ud z^j\wedge\ud\bz^k.\] It follows that  $h=\det(h_{j\bk})$ is a positive function. Notice that the K\"ahler metric on $M$ naturally induces an Hermitian metric on $K_M$, which can be expressed as \[\omega_0=\omega+\frac{i}{h}(\ud t-t\pt\log h)\wedge(\ud\bar{t}-\bar{t}\bpt\log h).\] Let $R:K_M\to\rr$ be the norm square function of fibers of $K_M\to M$. Clearly $R=\dfrac{|t|^2}{h}$ and the metric $\omega_0$ has the form \[\omega_0=\omega+i\frac{\pt R\wedge\bpt R}{R}.\] In general, $\omega_0$ is not a K\"ahler metric. It turns out that $\omega_0$ is K\"ahler if and only if \[\pt\bpt\log R=-\pt\bpt\log h=0,\] i.e., $(M,\omega)$ is K\"ahler Ricci-flat.

To get a better behaved metric, we can modify $\omega_0$ by some conformal factors. Let $u,v$ be smooth functions on $M$, and $f,g$ be smooth functions of $R$, then one can cook up a new Hermitian metric \[\omega_{u,v,f,g}=e^{u+f}\omega+ie^{v+g}\frac{\pt R\wedge\bpt R}{R}.\] It is not hard to check that $\Omega$ is of constant length if and only if
\begin{equation}\label{chern-ricci}
v=-nu\quad\textrm{and}\quad g=-nf+c
\end{equation}
for some constant $c$. Assuming this, if we further want the metric $\omega_{u,v,f,g}$ to be K\"ahler, then $u$ must be a constant. Without loss of generality we may assume that $u=0$, and we still need \[e^f\pt f\wedge\omega-ie^{-nf+c}\pt R\wedge\pt\bpt\log R=0.\] In other words, \[e^{(n+1)f-c}f'\omega=i\pt\bpt\log R=-i\pt\bpt\log h=Ric(\omega).\] We see immediately that this equation has a solution if and only if $\omega$ is K\"ahler-Einstein, in which case we get the Calabi ansatz.

In this section, we shall consider the case that $\omega_{u,v,f,g}$ is balanced and Chern-Ricci flat. Clearly such a metric satisfy the conformally balanced equation (\ref{cb}), which geometrically means that the manifold has restricted holonomy contained in $SU(n)$ with respect to the Strominger-Bismut connection. The condition we are considering far more stronger, which can be interpreted as that $\Omega$ is parallel under the Strominger-Bismut connection.

Now let us impose the balanced Chern-Ricci flat condition on $\omega_{u,v,f,g}$. For the ``Chern-Ricci flat'' part, i.e., $\|\Omega\|_{\omega_{u,v,f,g}}\equiv\textrm{const.}$, we still need (\ref{chern-ricci}) as before. Plug this in and we can compute that \[\omega_{u,v,f,g}^n=e^{n(u+f)}\omega^n+ine^{c-u-f}\omega^{n-1}\wedge\frac{\pt R\wedge\bpt R}{R}.\] Hence \[\pt(\omega_{u,v,f,g}^n)=e^{n(u+f)}\pt f\wedge\omega^n-ine^{c-u-f}\omega^{n-1}\wedge\pt R\wedge\pt\bpt\log R-ine^{c-u-f}\omega^{n-1}\wedge\pt u\wedge\frac{\pt R\wedge\bpt R}{R}.\] As there is no other terms to cancel the last term in the above equation, so we need $\pt u=0$ to make $\omega_{u,v,f,g}$ balanced. By choosing $u=0$, the balancing codition is reduced to \[e^{nf}\pt f\wedge\omega^n=ine^{c-f}\omega^{n-1}\wedge\pt R\wedge\pt\bpt\log R,\] or equivalently, \[e^{(n+1)f-c}f'\omega^n=-in\omega^{n-1}\pt\bpt\log h=s\cdot\omega^n,\] where $s$ is the scalar curvature function of $M$ up to a positive constant. From the calculation we conclude that this is possible if and only is $(M,\omega)$ has constant scalar curvature.

Constant scalar curvature K\"ahler metrics (cscK) has been studied extensively as a special case of extremal K\"ahler metrics \cite{calabi1982}. It is believed that the existence of cscK metrics is equivalent to certain stability condition in the sense of algebraic geometry.

Notice that in the derivation of Chern-Ricci flat balanced metric on $K_M$, what we actually need is that $\omega$ is balanced instead of being K\"ahler. In such a case, $s$ is known as the Chern scalar curvature, which is in general different from the scalar curvature in Riemannian geometry. Thus we have proved the following generalization of Calabi's result.
\begin{thm}\label{chernricci}
If $M$ admits a balanced metric with constant Chern scalar curvature, then $K_M$ admits a Chern-Ricci flat balanced metric.
\end{thm}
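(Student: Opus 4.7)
The plan is to re-run the ansatz computation preceding the theorem in the weaker balanced setting, identifying precisely where the K\"ahler hypothesis was used and replacing it by $d\omega^{n-1}=0$ together with constancy of the Chern scalar curvature. Keep the same ansatz
\[
\omega_{u,v,f,g}=e^{u+f}\omega+ie^{v+g}\frac{\pt R\wedge\bpt R}{R}
\]
on $K_M$, with $\Omega=\ud z^1\wedge\dots\wedge\ud z^n\wedge\ud t$ and $R=|t|^2/h$, $h=\det(h_{j\bk})$, all of which make sense for any Hermitian metric on $M$. The Chern-Ricci flat condition $\|\Omega\|_{\omega_{u,v,f,g}}\equiv\mathrm{const}$ is a pointwise one and uses only the algebraic identity $(\pt R\wedge\bpt R)^2=0$, so the reduction $v=-nu$ and $g=-nf+c$ from the preceding discussion goes through verbatim.

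Next I would redo the computation of $\pt\omega_{u,v,f,g}^n$. The binomial expansion
\[
\omega_{u,v,f,g}^n=e^{n(u+f)}\omega^n+ine^{c-u-f}\omega^{n-1}\wedge\frac{\pt R\wedge\bpt R}{R}
\]
is purely algebraic, and $\pt\omega^n=n\omega^{n-1}\wedge\pt\omega$ vanishes on $M$ for bidegree reasons. The only place where the K\"ahler hypothesis previously entered is in differentiating the second summand, which produces a potential term $\pt\omega^{n-1}\wedge\pt R\wedge\bpt R/R$. This term is killed exactly by the balanced hypothesis $\pt\omega^{n-1}=0$. Hence the formula for $\pt\omega_{u,v,f,g}^n$ displayed in the preceding paragraph holds verbatim for balanced $\omega$, and the same argument that $\pt u$ appears with no cancellation partner forces $u=0$. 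The balanced condition on $K_M$ then reduces to
\[
e^{(n+1)f-c}f'\,\omega^n=-in\,\omega^{n-1}\wedge\pt\bpt\log h.
\]

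The right-hand side equals $s\cdot\omega^n$ by the defining trace relation for the Chern scalar curvature, an identity valid for any Hermitian metric. Under the hypothesis that $s$ is constant on $M$, this is an autonomous first-order ODE $e^{(n+1)f-c}f'=s$ in the single radial variable $R\in[0,\infty)$, which integrates explicitly to $e^{(n+1)f-c}=(n+1)sR+C_0$. Choosing $C_0>0$ (and, if $s<0$, restricting to $R$ in a suitable interval about the zero section) gives a smooth positive $f(R)$, and the resulting Hermitian form $\omega_{0,0,f,-nf+c}$ is manifestly positive since $e^f\omega$ is positive on horizontal vectors and $(\pt R\wedge\bpt R)/R$ is positive in the vertical direction. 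There is no real analytical obstruction here; the substantive new input beyond the preceding discussion is the single observation that $\pt\omega^{n-1}=0$ is precisely the vanishing needed at the differentiation step, after which the argument reduces to bookkeeping and an elementary ODE.
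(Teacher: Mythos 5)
Your proposal is correct and follows essentially the same route as the paper: the paper's own ``proof'' is precisely the observation that the Calabi-type computation with the ansatz $\omega_{u,v,f,g}$ only ever uses $\pt\omega^{n-1}=0$ rather than $\pt\omega=0$, after which constancy of the Chern scalar curvature lets one integrate $e^{(n+1)f-c}f'=s$. Your added remark that for $s<0$ the solution $e^{(n+1)f-c}=(n+1)sR+C_0$ only yields a metric on a disc subbundle is a genuine (and worthwhile) point of care that the paper glosses over.
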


Recall that on a complex $n$-fold $X$, a balanced metric $\omega$ defines the so-called balanced class 
\[\left[\frac{\omega^{n-1}}{(n-1)!}\right]\in H^{n-1,n-1}_{BC}(X)=\dfrac{\ud\textrm{-closed }(n-1,n-1)\textrm{-forms}}{i\pt\bpt\textrm{-exact }(n-1,n-1)\textrm{-forms}}.\]
The balanced version of Gauduchon conjecture has been proposed for some time, see \cite[Conjecture 4.1]{tosatti2015} for instance. In particular, for balanced manifolds with trivial canonical bundle, this conjecture implies the existence of Chern-Ricci flat balanced metrics in any given balanced class. We refer to \cite{szekelyhidi2015} for recent progresses on this problem.

Theorem \ref{chernricci} implies that Chern-Ricci flat balanced metrics should be viewed as a balanced analogue of extremal K\"ahler metrics. We shall justify this claim by the following consideration.

On a compact balanced manifold $(X,\omega)$, a very useful property is that the total Chern scalar curvature
\[\int_Xs\cdot\frac{\omega^n}{n!}=\int_X\rho\wedge\frac{\omega^{n-1}}{(n-1)!}\]
depends only on the complex structure of $X$ and the balanced class. Here $\rho=-i\pt\bpt\log h$ is the Chern-Ricci form. As an analogue of K\"ahler csae, it is natural to consider the variational problem associated to the Calabi-type functional \cite{calabi1982}
\[\cls(\omega)=\int_Xs^2\cdot\frac{\omega^n}{n!},\]
where $\dfrac{\omega^{n-1}}{(n-1)!}$ is allowed to vary in a given balanced class. Since we can modify $\omega$ by $i\pt\bpt\alpha$, where $\alpha$ is any $(n-2,n-2)$-form, we expect the associated Euler-Lagrange equation is an equation of $(n-2,n-2)$-forms, or dually, a $(2,2)$-form equation. 

Indeed, it is not hard to derive the following
\begin{thm}
A balanced metric $\omega$ is a critical point of $\cls(\omega)$ if and only if it satisfies
\begin{equation}\label{extremalbalance}
2(n-1)i\pt\bpt s\wedge\rho=i\pt\bpt((2\Delta s+s^2)\omega),
\end{equation}
where $\Delta_c$ is the complex Laplacian defined by
\[\Delta f=\Lambda(i\pt\bpt f)=h^{k\bj}\frac{\pt^2f}{\pt z^j\pt\bz^k}.\]
We shall call such balanced metrics extremal.
\end{thm}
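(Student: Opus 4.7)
The plan is to compute the first variation of $\cls(\omega)=\int_X s^2\,\omega^n/n!$ along a family of balanced metrics $\omega_t$ preserving the balanced class, then read off the Euler-Lagrange equation as the vanishing of a $(2,2)$-form dual to the $(n-2,n-2)$-form variations. Writing $\frac{d}{dt}\big|_{t=0}\omega_t^{n-1}/(n-1)!=i\pt\bpt\alpha$ for a real $(n-2,n-2)$-form $\alpha$ and differentiating gives
\[\dot\omega\wedge\frac{\omega^{n-2}}{(n-2)!}=i\pt\bpt\alpha.\]
Wedging with $\omega$ produces both $\frac{d}{dt}\big|_{t=0}\omega_t^n/n!=\frac{1}{n-1}\omega\wedge i\pt\bpt\alpha$ and $\Lambda\dot\omega\cdot\omega^n/n!=\frac{1}{n-1}\omega\wedge i\pt\bpt\alpha$. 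The Chern-Ricci form satisfies $\dot\rho=-i\pt\bpt(\Lambda\dot\omega)$, since the variation of $\log(\omega^n)$ is $\Lambda\dot\omega$.

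Next I would differentiate the defining relation $s\cdot\omega^n/n!=\rho\wedge\omega^{n-1}/(n-1)!$ to isolate $\dot s$:
\[\dot s\cdot\frac{\omega^n}{n!}=\dot\rho\wedge\frac{\omega^{n-1}}{(n-1)!}+\rho\wedge i\pt\bpt\alpha-\frac{s}{n-1}\,\omega\wedge i\pt\bpt\alpha.\]
Substituting into $\frac{d}{dt}\cls=\int_X 2s\dot s\cdot\omega^n/n!+\int_X s^2\cdot\tfrac{d}{dt}(\omega^n/n!)$ and integrating $i\pt\bpt(\Lambda\dot\omega)$ by parts against the closed form $\omega^{n-1}/(n-1)!$ (using the balanced condition together with the identity $i\pt\bpt s\wedge\omega^{n-1}/(n-1)!=\Delta s\cdot\omega^n/n!$), all of the terms should collapse into
\[\frac{d}{dt}\cls=\int_X\left(2s\rho-\frac{2\Delta s+s^2}{n-1}\,\omega\right)\wedge i\pt\bpt\alpha.\]
A further integration by parts moves $i\pt\bpt$ off $\alpha$ and onto the $(1,1)$-factor in brackets, producing a $(2,2)$-form paired against $\alpha$.

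The final simplification uses the identity $i\pt\bpt(s\rho)=i\pt\bpt s\wedge\rho$, which follows from $\pt\rho=\bpt\rho=0$ --- itself a consequence of $d\rho=0$ combined with $\rho$ having pure type $(1,1)$. Setting $d\cls/dt=0$ for all real $\alpha\in\Omega^{n-2,n-2}(X)$ and invoking the non-degeneracy of the wedge pairing $\Omega^{2,2}(X)\times\Omega^{n-2,n-2}(X)\to\Omega^{n,n}(X)$ forces the resulting $(2,2)$-form to vanish, giving $2(n-1)i\pt\bpt s\wedge\rho=i\pt\bpt((2\Delta s+s^2)\omega)$. The main obstacle will be careful bookkeeping through these successive integrations by parts: they must be carried out for $(1,1)$-forms paired against $(n-2,n-2)$-forms rather than for scalars, and among the factors involved only $\omega^{n-1}/(n-1)!$ and $\rho$ are closed, so the balanced condition and the closedness of the Chern-Ricci form are both essential at different steps.
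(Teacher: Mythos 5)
Your derivation is correct: the first-variation identities $\dot\omega\wedge\omega^{n-2}/(n-2)!=i\pt\bpt\alpha$, $\Lambda\dot\omega\cdot\omega^n/n!=\tfrac{1}{n-1}\omega\wedge i\pt\bpt\alpha$, $\dot\rho=-i\pt\bpt(\Lambda\dot\omega)$, and the two integrations by parts (the first using $\ud(\omega^{n-1})=0$ and $i\pt\bpt s\wedge\omega^{n-1}/(n-1)!=\Delta s\cdot\omega^n/n!$, the second using $\pt\rho=\bpt\rho=0$ to replace $i\pt\bpt(s\rho)$ by $i\pt\bpt s\wedge\rho$) combine to give exactly $\int_X\bigl(2i\pt\bpt s\wedge\rho-\tfrac{1}{n-1}i\pt\bpt((2\Delta s+s^2)\omega)\bigr)\wedge\alpha$, and the pointwise perfectness of the pairing $\Lambda^{2,2}\times\Lambda^{n-2,n-2}\to\Lambda^{n,n}$ yields (\ref{extremalbalance}). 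The paper gives no proof of this theorem (it only remarks that the equation ``is not hard to derive''), and your computation is evidently the intended one; the only point worth adding is a sentence justifying that every $i\pt\bpt\alpha$-perturbation of $\omega^{n-1}/(n-1)!$ is realized by an actual path of balanced metrics, which follows from Michelsohn's result that a positive $(n-1,n-1)$-form has a unique positive $(n-1)$-st root.
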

From Equation (\ref{extremalbalance}), we have the following observations:
\begin{enumerate}
\item If the background metric is non-K\"ahler, one can easily show that $i\pt\bpt\omega\neq0$. Hence $s=\textrm{const.}$ is an extremal balanced metric if and only if $s=0$. In particular, Chern-Ricci flat balanced metrics are extremal. An intuitive reason is that there is no analogue of K\"ahler-Einstein metrics with nonzero Einstein constant on non-K\"ahler balanced manifolds. Indeed, if there is a smooth function $f$ such that \[\rho=f\cdot\omega,\] one can deduce that $\rho\equiv0$.
\item If there exists a (1,0)-form $\mu$ such that
    \begin{equation}\label{aepplibalanced}
    2(n-1)s\cdot\rho=(2\Delta s+s^2)\omega+\bpt\mu+\pt\bar{\mu},
    \end{equation}
    then (\ref{extremalbalance}) holds. Such a condition is automatically satisfied if $H^{1,1}_A(X)=0$, where $H^*_A(X)$ is the Aeppli cohomology group of $X$. In this case, by taking trace of (\ref{aepplibalanced}), we get
    \[2(n-1)s^2=2n\Delta s+ns^2+\Lambda(\bpt\mu+\pt\bar{\mu}).\]
    Since the last term is of divergence form, by integration over $X$, we get
    \[(n-2)\int_Xs^2\cdot\frac{\omega^n}{n!}=0.\]
    As we always assume that $n>2$ (otherwise $\omega$ is K\"ahler), we conclude that $s\equiv0$.
\item Assuming $s\equiv0$, if we further assume that $0=c_1(X)\in H^{1,1}_{BC}(X)$, then $\rho=i\pt\bpt f$ for some globally defined real function $f$. Therefore
    \[0=s=\Lambda\rho=\Delta f.\]
    Hence by maximal principle $f$ is a constant and $\rho\equiv0$.
\end{enumerate}
A very important class of non-K\"ahler Calabi-Yau 3-folds is of the form $X_k=\#_k(S^3\times S^3)$ for $k\geq2$ \cite{friedman1991, lu1996}, which can be constructed from projective Calabi-Yau 3-folds by taking conifold transitions. Moreover, these manifolds satisfy the $\pt\bpt$-lemma and admit balanced metrics \cite{fu2012}. Therefore conditions in (b) and (c) above are satisfied. Consequently we have
\begin{cor}
Extremal balanced metrics on $X_k$ are exactly those Chern-Ricci flat balanced metrics.
\end{cor}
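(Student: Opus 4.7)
The plan is to apply the three observations (a), (b), (c) just preceding the corollary to the specific geometry of $X_k$. Observation (a) already gives one direction for free: any Chern-Ricci flat balanced metric has $s\equiv 0$, so equation (\ref{extremalbalance}) collapses to $0=0$, and such a metric is extremal. The content of the corollary is therefore the reverse inclusion, and my plan is to verify that the hypotheses of (b) and (c) are met on $X_k$.

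For this I would first assemble the three structural facts about $X_k$ that are cited in the paper: it carries balanced metrics by Fu-Li-Yau \cite{fu2012}, its canonical bundle is trivial, and it satisfies the $\pt\bpt$-lemma by Friedman and Lu-Wang. The crucial consequence I want to extract is that, under the $\pt\bpt$-lemma, there are natural isomorphisms $H^{p,q}_A(X_k)\cong H^{p,q}_{BC}(X_k)\cong H^{p,q}_{\bpt}(X_k)$ compatible with the Hodge decomposition of $H^{p+q}(X_k,\ccc)$. Since $X_k=\#_k(S^3\times S^3)$ has $b_2(X_k)=0$ (the only nonzero reduced cohomology of $S^3\times S^3$ sits in degrees $3$ and $6$), the whole of $H^2(X_k,\ccc)$ vanishes, and in particular $H^{1,1}_A(X_k)=H^{1,1}_{BC}(X_k)=0$.

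With these vanishings in hand, the argument runs as a direct chain. Suppose $\omega$ is an extremal balanced metric on $X_k$. Then the real $(1,1)$-form $\Psi=2(n-1)s\rho-(2\Delta s+s^2)\omega$ satisfies $i\pt\bpt\Psi=0$ by (\ref{extremalbalance}), so its Aeppli class in $H^{1,1}_A(X_k)=0$ is trivial; choosing a real primitive, there exists a $(1,0)$-form $\mu$ with $\Psi=\bpt\mu+\pt\bar{\mu}$, which is precisely equation (\ref{aepplibalanced}). Since $n=3>2$, the integration argument in observation (b) then forces $s\equiv 0$. The Chern-Ricci form $\rho$ is a closed real $(1,1)$-form representing $c_1(X_k)$, which lies in $H^{1,1}_{BC}(X_k)=0$; hence $\rho=i\pt\bpt f$ for a globally defined real function $f$, and observation (c) gives $0=s=\Delta f$. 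By the maximum principle on the compact manifold $X_k$, $f$ is constant and $\rho\equiv 0$, so $\omega$ is Chern-Ricci flat.

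There is essentially no analytic difficulty here; the entire content is cohomological. The only step that could plausibly be considered the main obstacle is the verification of $H^{1,1}_A(X_k)=0$, but this reduces cleanly to the vanishing $b_2(X_k)=0$ together with the already-cited $\pt\bpt$-lemma for $X_k$, so in the end the corollary is a clean combination of the three observations with two standard topological/cohomological inputs.
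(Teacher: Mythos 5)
Your proposal is correct and follows essentially the same route as the paper, which simply notes that the $\pt\bpt$-lemma and the triviality of $K_{X_k}$ make the hypotheses of observations (b) and (c) hold, with observation (a) giving the converse inclusion. Your additional verification that $b_2(X_k)=0$ forces $H^{1,1}_A(X_k)=H^{1,1}_{BC}(X_k)=0$ is exactly the cohomological input the paper leaves implicit.
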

Hopefully this point of view will be useful in proving the balanced Gauduchon conjecture for $X_k$'s.
 
\bibliographystyle{alpha}

\bibliography{C:/Users/Piojo/Dropbox/Documents/Source}

\end{document}